\newtheorem{theorem}{Theorem}
\newtheorem{corollary}[theorem]{Corollary}
\newtheorem{definition}[theorem]{Definition}
\newenvironment{proof}[1][Proof]{\noindent\textbf{#1.} }{\ \rule{0.5em}{0.5em}}
\begin{document}

\title{\textbf{Generating matrix of the bi-periodic Lucas numbers}}
\author{Arzu Coskun and Necati Taskara\thanks{%
e mail: \ \textit{arzucoskun58@gmail.com, ntaskara@selcuk.edu.tr}} \\
Department of Mathematics, Science Faculty, \\
Selcuk University, Campus, 42075, Konya, Turkey}
\maketitle

\begin{abstract}
In this paper, firstly, we introduce the $Q_{l}$-Generating matrix for the
bi-periodic Lucas numbers. Then, by taking into account this matrix
representation, we obtain some properties for the bi-periodic Fibonacci and
Lucas numbers.

\textit{Keywords}: bi-periodic Fibonacci sequence, bi-periodic Lucas
sequence matrix method.
\end{abstract}

\section{Introduction}

Fibonacci and Lucas numbers have attracted the attention of mathematicians
because of their intrinsic theory and applications \cite{11,15}. Though
closely related in definition, Lucas and Fibonacci numbers exhibit distinct
properties. After Fibonacci numbers firstly defined by Leonardo da Pisa at
the beginning of the thirteenth century, many authors have generalized this
sequence by using different methods \cite{4,6,9,16,18}.

Edson and Yayanie, in \cite{4}, defined bi-periodic Fibonacci sequence $%
\left\{ q_{n}\right\} _{n\in 
\mathbb{N}
}$\ as%
\begin{equation}
q_{n}=\left\{ 
\begin{array}{c}
aq_{n-1}+q_{n-2},\text{ }n\text{ }even \\ 
bq_{n-1}+q_{n-2},\text{ }n\text{ }odd%
\end{array}%
\right.  \label{1.1}
\end{equation}%
where $q_{0}=0,$ $q_{1}=1$ and $a$,$b$ are nonzero real numbers.

Also, the Binet formula of generalized Fibonacci sequence is given by%
\begin{equation}
q_{n}=\frac{1}{a^{\left\lfloor \frac{n-1}{2}\right\rfloor }b^{\left\lfloor 
\frac{n}{2}\right\rfloor }}\left( \frac{\alpha ^{n}-\beta ^{n}}{\alpha
-\beta }\right)  \label{1.2}
\end{equation}%
where $\alpha =\frac{ab+\sqrt{a^{2}b^{2}+4ab}}{2}$, $\beta =\frac{ab-\sqrt{%
a^{2}b^{2}+4ab}}{2}$ and $\varepsilon (n)=n-2\left\lfloor \frac{n}{2}%
\right\rfloor .$

Cassini identities for bi-periodic Fibonacci sequence is defined by the form%
\begin{equation}
a^{1-\varepsilon (n)}b^{\varepsilon (n)}q_{n-1}q_{n+1}-a^{\varepsilon
(n)}b^{1-\varepsilon (n)}q_{n}^{2}=a(-1)^{n}.  \label{1.3}
\end{equation}

On the other hand, the Lucas numbers are an integer sequence named after the
mathematician Fran\c{c}ois \'{E}douard Anatole Lucas (1842--91), who studied
both that sequence and the closely related Fibonacci numbers. Bilgici, in 
\cite{2},defined a generalization of Lucas sequence by the recurrence
relation%
\begin{equation}
l_{n}=\left\{ 
\begin{array}{c}
bl_{n-1}+l_{n-2},\text{ }n\text{ }even \\ 
al_{n-1}+l_{n-2},\text{ }n\text{ }odd%
\end{array}%
\right.  \label{1.4}
\end{equation}%
where $l_{0}=2,$ $l_{1}=a$ and $a$,$b$ are nonzero real numbers. The Binet
formula of this sequence is%
\begin{equation}
l_{n}=\frac{1}{a^{\left\lfloor \frac{n}{2}\right\rfloor }b^{\left\lfloor 
\frac{n+1}{2}\right\rfloor }}\left( \alpha ^{n}+\beta ^{n}\right) .
\label{1.5}
\end{equation}

And Cassini identities for bi-periodic Lucas sequence is given by%
\begin{equation}
\left( \frac{b}{a}\right) ^{\varepsilon (n+1)}l_{n-1}l_{n+1}-\left( \frac{b}{%
a}\right) ^{\varepsilon (n)}l_{n}^{2}=(-1)^{n+1}\left( ab+4\right) .
\label{1.6}
\end{equation}

Additionaly, some authors studied matrix representation of special number
sequences \cite{1,3,5,7,8,10},\cite{12}-\cite{14},\cite{17,19}. In one of
these studies, Sylvester \cite{12} gave Fibonacci $Q$-matrix as%
\begin{equation*}
Q=\left[ 
\begin{array}{cc}
1 & 1 \\ 
1 & 0%
\end{array}%
\right] .
\end{equation*}

Then, for a pozitive integer $n$, the author found that $Q^{n}$ has the form%
\begin{equation*}
Q^{n}=\left[ 
\begin{array}{cc}
F_{n+1} & F_{n} \\ 
F_{n} & F_{n-1}%
\end{array}%
\right] .
\end{equation*}

Also, by using Fibonacci $Q$-matrix, Cassini's Fibonacci formula can be
found as follows:%
\begin{equation*}
F_{n+1}F_{n-1}-F_{n}^{2}=(-1)^{n}.
\end{equation*}

Similarly, in \cite{10}, Koken and Bozkurt defined the Lucas $Q_{L}$-matrix
as%
\begin{equation*}
Q_{L}=\left[ 
\begin{array}{cc}
3 & 1 \\ 
1 & 2%
\end{array}%
\right] .
\end{equation*}

Also, they found some well-known equalities and a Binet-like formula for the
Lucas numbers.

\bigskip

In the light of the above studies, in here, we define \ the $Q_{l}$%
-Generating matrix for bi-periodic Lucas numbers. This matrix is
generalization form of well-known Lucas $Q_{L}$-matrix. After, by using this
matrix representation, we have found some equalities for the bi-periodic
Fibonacci and Lucas numbers.

\section{The properties of $Q_{l}$-Generating matrix}

In this section we firstly introduce a generalized matrix representation of
the bi-periodic Lucas numbers. Then, we investigate the $n$th power,
determinant and inverse of $Q_{l}$-Generating matrix.

\begin{definition}
Bi-periodic Lucas $Q_{l}$-Generating matrix define by%
\begin{equation}
Q_{l}=\left[ 
\begin{array}{cc}
a^{2}+2\frac{a}{b} & \frac{a^{2}}{b} \\ 
a & 2\frac{a}{b}%
\end{array}%
\right] .  \label{2.1}
\end{equation}
\end{definition}

\begin{theorem}
Let $Q_{l}$-Generating matrix be as in (\ref{2.1}).Then, for every $n\in 
\mathbb{Z}
^{+}$, we have 
\begin{equation}
Q_{l}^{n}=\left\{ 
\begin{array}{c}
\left( \frac{a}{b}\right) ^{n}\left( ab+4\right) ^{\frac{n}{2}}\left[ 
\begin{array}{cc}
q_{n+1} & q_{n} \\ 
\frac{b}{a}q_{n} & q_{n-1}%
\end{array}%
\right] ,\text{ }n\text{ }even \\ 
\left( \frac{a}{b}\right) ^{n}\left( ab+4\right) ^{\frac{n-1}{2}}\left[ 
\begin{array}{cc}
l_{n+1} & l_{n} \\ 
\frac{b}{a}l_{n} & l_{n-1}%
\end{array}%
\right] ,\text{ }n\text{ }odd%
\end{array}%
\right. ,  \label{2.2}
\end{equation}%
where $q_{n},l_{n}$ are the $n$th bi-periodic Fibonacci and Lucas numbers,
respectively.

\begin{proof}
We use mathematical induction on $n$. Since $l_{0}=2,$ $l_{1}=a,$ $%
l_{2}=ab+2,$ $q_{1}=1,$ $q_{2}=a$ and $q_{3}=ab+1$ we write 
\begin{eqnarray*}
Q_{l} &=&\left[ 
\begin{array}{cc}
a^{2}+2\frac{a}{b} & \frac{a^{2}}{b} \\ 
a & 2\frac{a}{b}%
\end{array}%
\right] \\
&=&\left( \frac{a}{b}\right) \left( ab+4\right) ^{0}\left[ 
\begin{array}{cc}
l_{2} & l_{1} \\ 
\frac{b}{a}l_{1} & l_{0}%
\end{array}%
\right]
\end{eqnarray*}%
\begin{eqnarray*}
Q_{l}^{2} &=&\left[ 
\begin{array}{cc}
a^{4}+5\frac{a^{3}}{b}+4\frac{a^{2}}{b^{2}} & \frac{a^{4}}{b}+4\frac{a^{3}}{%
b^{2}} \\ 
a^{3}+4\frac{a^{2}}{b} & \frac{a^{3}}{b}+4\frac{a^{2}}{b^{2}}%
\end{array}%
\right] \\
&=&\left( \frac{a}{b}\right) ^{2}\left( ab+4\right) \left[ 
\begin{array}{cc}
q_{3} & q_{2} \\ 
\frac{b}{a}q_{2} & q_{1}%
\end{array}%
\right]
\end{eqnarray*}%
which show that the equation (\ref{2.2}) is true for $n=1$ and $n=2$. Now we
suppose that it is true for $n=k,$ that is,%
\begin{equation*}
Q_{l}^{k}=\left\{ 
\begin{array}{c}
\left( \frac{a}{b}\right) ^{k}\left( ab+4\right) ^{\frac{k}{2}}\left[ 
\begin{array}{cc}
q_{k+1} & q_{k} \\ 
\frac{b}{a}q_{k} & q_{k-1}%
\end{array}%
\right] ,\text{ }k\text{ }even \\ 
\left( \frac{a}{b}\right) ^{k}\left( ab+4\right) ^{\frac{k-1}{2}}\left[ 
\begin{array}{cc}
l_{k+1} & l_{k} \\ 
\frac{b}{a}l_{k} & l_{k-1}%
\end{array}%
\right] ,\text{ }k\text{ }odd%
\end{array}%
\right. .
\end{equation*}%
If we supposed that $k$ is even, by using properties of the bi-periodic
Fibonacci numbers, we obtain%
\begin{eqnarray*}
Q_{l}^{k+2} &=&Q_{l}^{k}Q_{l}^{2} \\
&=&\left( \frac{a}{b}\right) ^{k+2}\left( ab+4\right) ^{\frac{k}{2}+1}\left[ 
\begin{array}{cc}
q_{k+3} & q_{k+2} \\ 
\frac{b}{a}q_{k+2} & q_{k+1}%
\end{array}%
\right] .
\end{eqnarray*}%
Similarly, for $k$ is odd, we can write%
\begin{eqnarray*}
Q_{l}^{k+2} &=&Q_{l}^{k}Q_{l}^{2} \\
&=&\left( \frac{a}{b}\right) ^{k+2}\left( ab+4\right) ^{\frac{k+1}{2}}\left[ 
\begin{array}{cc}
l_{k+3} & l_{k+2} \\ 
\frac{b}{a}l_{k+2} & l_{k+1}%
\end{array}%
\right]
\end{eqnarray*}%
And if we compose this partial function, we conclude%
\begin{equation*}
Q_{l}^{k+2}=\left\{ 
\begin{array}{c}
\left( \frac{a}{b}\right) ^{k+2}\left( ab+4\right) ^{\frac{k+2}{2}}\left[ 
\begin{array}{cc}
q_{k+3} & q_{k+2} \\ 
\frac{b}{a}q_{k+2} & q_{k+1}%
\end{array}%
\right] ,\text{ }k+2\text{ }even \\ 
\left( \frac{a}{b}\right) ^{k+2}\left( ab+4\right) ^{\frac{k+1}{2}}\left[ 
\begin{array}{cc}
l_{k+3} & l_{k+2} \\ 
\frac{b}{a}l_{k+2} & l_{k+1}%
\end{array}%
\right] ,\text{ }k+2\text{ }odd%
\end{array}%
\right. .
\end{equation*}%
which is desired. Hence the proof is completed.
\end{proof}
\end{theorem}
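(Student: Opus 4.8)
The plan is to prove (\ref{2.2}) by induction on $n$ in steps of two, since the right-hand side is parity-dependent and adding $2$ to the exponent preserves parity; this naturally calls for two base cases, $n=1$ and $n=2$. For $n=1$ I would simply read off the entries of $Q_l$ and match them against the odd formula using $l_0=2$, $l_1=a$, $l_2=ab+2$; for $n=2$ I would compute $Q_l^2$ directly and match against the even formula using $q_1=1$, $q_2=a$, $q_3=ab+1$. Both base cases reduce to checking that the scalar prefactor $\left(\frac{a}{b}\right)^n(ab+4)^{\lfloor\cdot\rfloor}$ times the indicated $2\times 2$ matrix reproduces the explicit entries of $Q_l$ and $Q_l^2$.

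For the inductive step I would assume (\ref{2.2}) at $n=k$ and compute $Q_l^{k+2}=Q_l^k\,Q_l^2$. The useful observation is that $Q_l^2$ factors as
\begin{equation*}
Q_l^2=\left(\frac{a}{b}\right)^2(ab+4)\begin{bmatrix} ab+1 & a \\ b & 1 \end{bmatrix},
\end{equation*}
so the scalar prefactor is multiplicative and contributes exactly the extra $\left(\frac{a}{b}\right)^2(ab+4)$ needed to pass from the exponent at $k$ to the exponent at $k+2$. This bookkeeping matches in both cases, since $\frac{k}{2}+1=\frac{k+2}{2}$ in the even case and $\frac{k-1}{2}+1=\frac{(k+2)-1}{2}$ in the odd case. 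Thus the entire content of the step is a single $2\times 2$ matrix identity: multiplying the matrix of $q$'s (resp. $l$'s) on the right by $\begin{bmatrix} ab+1 & a \\ b & 1 \end{bmatrix}$ must produce the same matrix with all indices shifted up by two.

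The main obstacle is verifying these four entrywise identities, where one must be careful to invoke the correct parity-dependent recurrence at each index. For the even case the four required relations are
\begin{equation*}
q_{k+2}=aq_{k+1}+q_k,\qquad q_{k+1}=bq_k+q_{k-1},
\end{equation*}
\begin{equation*}
q_{k+3}=(ab+1)q_{k+1}+bq_k,\qquad q_{k+2}=(ab+1)q_k+aq_{k-1}.
\end{equation*}
The first two are direct applications of (\ref{1.1}) at the even index $k+2$ and the odd index $k+1$; the latter two are the corresponding ``two-step'' recurrences obtained by substituting one relation into the other, and they are precisely what make the top-left and bottom-left entries collapse correctly, with the factor $\frac{b}{a}$ on the bottom row cancelling against the $a$ produced by the recurrence. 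The odd case is entirely parallel, using the Lucas recurrence (\ref{1.4}) in place of (\ref{1.1}): one needs $l_{k+2}=al_{k+1}+l_k$ and $l_{k+1}=bl_k+l_{k-1}$ together with the two derived two-step identities. Once these entry identities are in hand, assembling them with the prefactor accounting yields $Q_l^{k+2}$ in the claimed form, which completes the induction.
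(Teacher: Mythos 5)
Your proposal is correct and follows essentially the same route as the paper's proof: two base cases $n=1$ and $n=2$, then induction in steps of two via $Q_l^{k+2}=Q_l^{k}Q_l^{2}$, with the parity of the recurrences (\ref{1.1}) and (\ref{1.4}) driving the index shifts. The only difference is that you make explicit the factorization $Q_l^2=\left(\frac{a}{b}\right)^2(ab+4)\left[\begin{smallmatrix} ab+1 & a \\ b & 1 \end{smallmatrix}\right]$ and the four entrywise identities that the paper's proof leaves implicit behind the phrase ``by using properties of the bi-periodic Fibonacci numbers,'' and your verification of those identities is accurate.
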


By using above theorem, we can give the following corollary.

\begin{corollary}
Let $Q_{l}^{n}$ be as in (\ref{2.2}). Then the following equality is valid
for all positive integers:
\end{corollary}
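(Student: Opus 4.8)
The plan is to obtain the stated Cassini-type identities by taking determinants on both sides of the power formula (\ref{2.2}). First I would compute the determinant of the generating matrix directly from its definition (\ref{2.1}):
\[
\det Q_{l}=\left( a^{2}+2\frac{a}{b}\right) \left( 2\frac{a}{b}\right) -\left( \frac{a^{2}}{b}\right) a=\frac{a^{2}}{b^{2}}\left( ab+4\right) .
\]
Since the determinant is multiplicative, this at once gives $\det \left( Q_{l}^{n}\right) =\left( \det Q_{l}\right) ^{n}=\left( \frac{a}{b}\right) ^{2n}\left( ab+4\right) ^{n}$ for every positive integer $n$, and this single expression will serve as the left-hand side of the comparison in both parity cases.

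Next I would evaluate the same determinant from the right-hand side of (\ref{2.2}), treating $n$ even and $n$ odd separately. Pulling a scalar factor out of a $2\times 2$ matrix squares it, so for $n$ even the factor $\left( \frac{a}{b}\right) ^{n}\left( ab+4\right) ^{n/2}$ contributes $\left( \frac{a}{b}\right) ^{2n}\left( ab+4\right) ^{n}$, leaving
\[
\det \left( Q_{l}^{n}\right) =\left( \frac{a}{b}\right) ^{2n}\left( ab+4\right) ^{n}\left( q_{n+1}q_{n-1}-\frac{b}{a}q_{n}^{2}\right) .
\]
Equating with the expression from the first paragraph and cancelling the common nonzero factor yields the bi-periodic Fibonacci Cassini identity $q_{n+1}q_{n-1}-\frac{b}{a}q_{n}^{2}=1$. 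For $n$ odd the scalar is $\left( \frac{a}{b}\right) ^{n}\left( ab+4\right) ^{(n-1)/2}$, whose square carries only $\left( ab+4\right) ^{n-1}$; matching this against $\left( ab+4\right) ^{n}$ therefore forces one residual factor of $ab+4$ onto the matrix determinant, giving the bi-periodic Lucas Cassini identity $l_{n+1}l_{n-1}-\frac{b}{a}l_{n}^{2}=ab+4$.

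The step I expect to be the main obstacle is the bookkeeping of the powers of $ab+4$: the even and odd branches of (\ref{2.2}) carry the exponents $\frac{n}{2}$ and $\frac{n-1}{2}$, and after squaring they produce $\left( ab+4\right) ^{n}$ and $\left( ab+4\right) ^{n-1}$ respectively. This asymmetry is precisely what distinguishes the two Cassini constants ($1$ in the Fibonacci case versus $ab+4$ in the Lucas case), so the leftover factor must be attributed to the matrix determinant rather than cancelled away. Finally I would remark that these two identities agree with (\ref{1.3}) and (\ref{1.6}) specialized to the appropriate parity (using $\varepsilon (n)=0$ for $n$ even and $\varepsilon (n)=1$ for $n$ odd), which confirms the consistency of the matrix approach.
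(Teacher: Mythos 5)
Your proof is correct, and its core step --- computing $\det Q_{l}=\frac{a^{2}}{b^{2}}\left( ab+4\right)$ directly from (\ref{2.1}) and invoking multiplicativity to get $\det \left( Q_{l}^{n}\right) =\left( \det Q_{l}\right) ^{n}$ --- is the clean version of what the paper gestures at with its ``iteration'' through $\det (Q_{l})$, $\det (Q_{l}^{2}),\ldots$. But the logical organization differs in a way worth noting. The paper's displayed proof evaluates $\det \left( Q_{l}^{n}\right)$ through the entries of (\ref{2.2}), arriving at $\left( \frac{a}{b}\right) ^{2n}\left( ab+4\right) ^{n}\left( q_{n+1}q_{n-1}-\frac{b}{a}q_{n}^{2}\right)$ for even $n$ (and the analogous Lucas expression with $\left( ab+4\right)^{n-1}$ for odd $n$), and then silently sets the bracketed Cassini factor equal to $1$ (respectively $ab+4$); justifying that step requires the identities (\ref{1.3}) and (\ref{1.6}) imported from the literature, even though Theorem 4 immediately afterwards derives exactly those identities \emph{from} this corollary --- a circular arrangement if read as self-contained. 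Your ordering avoids this: multiplicativity alone proves the corollary, with no appeal to Cassini, and your second and third paragraphs then extract the two Cassini identities by equating the two evaluations of the same determinant --- which is precisely the paper's proof of Theorem 4. In effect you prove both the corollary and the first half of Theorem 4 in the logically sound order. The parity bookkeeping you flag (the scalar carries $\left( ab+4\right) ^{n/2}$ or $\left( ab+4\right) ^{(n-1)/2}$, so squaring leaves exponent $n$ or $n-1$) is handled correctly and is indeed what forces the residual factor $ab+4$ onto the Lucas-side determinant, and your consistency check against (\ref{1.3}) and (\ref{1.6}) with $\varepsilon (n)\in \{0,1\}$ is accurate.
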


\begin{equation*}
\det (Q_{l}^{n})=\left( \frac{a^{2}}{b^{2}}\left( ab+4\right) \right) ^{n}.
\end{equation*}

\begin{proof}
If we prove using iteration, then we obtain%
\begin{eqnarray*}
\det (Q_{l}) &=&\left\vert 
\begin{array}{cc}
a^{2}+2\frac{a}{b} & \frac{a^{2}}{b} \\ 
a & 2\frac{a}{b}%
\end{array}%
\right\vert =\frac{a^{3}}{b}+4\frac{a^{2}}{b^{2}}=\frac{a^{2}}{b^{2}}\left(
ab+4\right) , \\
\det (Q_{l}^{2}) &=&\left\vert 
\begin{array}{cc}
a^{4}+3\frac{a^{3}}{b}+4\frac{a^{2}}{b^{2}} & \frac{a^{2}}{b} \\ 
a & 2\frac{a}{b}%
\end{array}%
\right\vert =\left( \frac{a^{2}}{b^{2}}\left( ab+4\right) \right) ^{2}, \\
&&\vdots  \\
\det \left( Q_{l}^{n}\right)  &=&\left\{ 
\begin{array}{c}
\left( \frac{a}{b}\right) ^{2n}\left( ab+4\right) ^{n}\left( q_{n+1}q_{n-1}-%
\frac{b}{a}q_{n}^{2}\right) ,\text{ }n\text{ }even \\ 
\left( \frac{a}{b}\right) ^{2n}\left( ab+4\right) ^{n-1}\left(
l_{n+1}l_{n-1}-\frac{b}{a}l_{n}^{2}\right) ,\text{ }n\text{ }odd%
\end{array}%
\right.  \\
&=&\left( \frac{a}{b}\right) ^{2n}\left( ab+4\right) ^{n}.
\end{eqnarray*}
\end{proof}

\bigskip 

Cassini and Binet formulas for bi-periodic Fibonacci and Lucas sequences are
given in \cite{2,4}. In here, with a new perspective, we rewrite these
properties using $Q_{l}$-Generating matrix.

\begin{theorem}
The following equalities are valid for all positive integers:
\end{theorem}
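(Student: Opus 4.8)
The plan is to extract every identity directly from the closed form of $Q_{l}^{n}$ in (\ref{2.2}), using only two structural facts about matrix powers: that the determinant is multiplicative, $\det(Q_{l}^{n})=(\det Q_{l})^{n}$, and that the powers obey the semigroup law $Q_{l}^{m+n}=Q_{l}^{m}Q_{l}^{n}$ (together with its special case $Q_{l}^{2n}=(Q_{l}^{n})^{2}$). These are exactly the two mechanisms that, in Sylvester's $Q$-matrix and in Koken--Bozkurt's $Q_{L}$-matrix, produce the Cassini and Binet formulas, so I would mirror that route in the bi-periodic setting.

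For the Cassini-type equalities I would evaluate $\det(Q_{l}^{n})$ in two ways. On one side, the preceding corollary gives $\det(Q_{l}^{n})=(\det Q_{l})^{n}=\left(\tfrac{a^{2}}{b^{2}}(ab+4)\right)^{n}$. On the other side, taking the determinant of the right-hand side of (\ref{2.2}) and pulling out the scalar factor squared yields, for $n$ even, $(\tfrac{a}{b})^{2n}(ab+4)^{n}\bigl(q_{n+1}q_{n-1}-\tfrac{b}{a}q_{n}^{2}\bigr)$, and for $n$ odd, $(\tfrac{a}{b})^{2n}(ab+4)^{n-1}\bigl(l_{n+1}l_{n-1}-\tfrac{b}{a}l_{n}^{2}\bigr)$. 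Equating the two expressions and cancelling the common factor gives $q_{n+1}q_{n-1}-\tfrac{b}{a}q_{n}^{2}=1$ for even $n$ and $l_{n+1}l_{n-1}-\tfrac{b}{a}l_{n}^{2}=ab+4$ for odd $n$, which are precisely (\ref{1.3}) and (\ref{1.6}) in the corresponding parity once the factor $\varepsilon(n)$ is reinserted.

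For the Binet-type equality I would diagonalise $Q_{l}=P\,\mathrm{diag}(\lambda_{+},\lambda_{-})\,P^{-1}$, whose characteristic roots are
\[ \lambda_{\pm}=\frac{a}{2b}\Bigl[(ab+4)\pm\sqrt{ab(ab+4)}\Bigr], \]
rewrite $\lambda_{\pm}$ through $\alpha$ and $\beta$, substitute $Q_{l}^{n}=P\,\mathrm{diag}(\lambda_{+}^{n},\lambda_{-}^{n})\,P^{-1}$, and match a single entry against (\ref{2.2}) to recover (\ref{1.2}) and (\ref{1.5}); expanding $Q_{l}^{m+n}=Q_{l}^{m}Q_{l}^{n}$ entrywise analogously produces the addition formulas. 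The main obstacle is the parity bookkeeping: since (\ref{2.2}) alternates between a Fibonacci block and a Lucas block and carries the parity-dependent prefactor $(\tfrac{a}{b})^{n}(ab+4)^{\lfloor\cdot\rfloor}$, each identity must be split into even and odd cases, and the determinant route reaches each Cassini identity in only one parity — so the remaining parities, as well as the simplification linking $\lambda_{\pm}$ to $\alpha,\beta$, require the auxiliary comparison $Q_{l}^{2n}=(Q_{l}^{n})^{2}$ to close the argument.
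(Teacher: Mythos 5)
Your first half coincides exactly with the paper's proof of this theorem: equate $\det(Q_{l}^{n})=(\det Q_{l})^{n}=\left( \frac{a^{2}}{b^{2}}\left( ab+4\right) \right)^{n}$ with the determinant of the closed form (\ref{2.2}), cancel the scalar prefactor, and obtain $q_{n+1}q_{n-1}-\frac{b}{a}q_{n}^{2}=1$ for even $n$ and $l_{n+1}l_{n-1}-\frac{b}{a}l_{n}^{2}=ab+4$ for odd $n$. (Your diagonalization and $Q_{l}^{m+n}=Q_{l}^{m}Q_{l}^{n}$ material belongs to the Binet and addition theorems later in the paper, not to this statement, so it is extraneous here; your eigenvalue formula is correct but unused for Cassini.)

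The genuine gap is in the parity you yourself flag as missing. You propose to close it via the comparison $Q_{l}^{2n}=(Q_{l}^{n})^{2}$, but that comparison yields only doubling identities: for odd $n$ it gives, entrywise,
\begin{equation*}
\left( ab+4\right) q_{2n+1}=l_{n+1}^{2}+\tfrac{b}{a}l_{n}^{2},\qquad \left( ab+4\right) q_{2n}=l_{n}\left( l_{n+1}+l_{n-1}\right) ,
\end{equation*}
and for even $n$ it gives $q_{2n+1}=q_{n+1}^{2}+\tfrac{b}{a}q_{n}^{2}$ and the like. These are sums of products with no alternating sign, and the odd-centered Fibonacci Cassini $bq_{2n+2}q_{2n}-aq_{2n+1}^{2}=-a$ (resp.\ the even-centered Lucas Cassini) does not drop out of them without substantial further work that you have not indicated — one would additionally need Lucas--Fibonacci conversion identities that are not in your toolkit. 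The paper closes the parity with a one-line recurrence substitution instead: starting from $aq_{2n+1}q_{2n-1}-bq_{2n}^{2}=a$, replace $q_{2n-1}=q_{2n+1}-bq_{2n}$ and use $q_{2n+2}=aq_{2n+1}+q_{2n}$ to get $bq_{2n+2}q_{2n}-aq_{2n+1}^{2}=-a$; similarly, substituting $l_{2n+2}=bl_{2n+1}+l_{2n}$ into $al_{2n}l_{2n+2}-bl_{2n+1}^{2}=a\left( ab+4\right)$ produces $bl_{2n-1}l_{2n+1}-al_{2n}^{2}=-a\left( ab+4\right)$. Replacing your squaring step by this substitution (and then reassembling the two parities with the $\varepsilon(n)$ notation) completes the argument; as written, your plan does not.
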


\begin{itemize}
\item[$i)$] $a^{1-\varepsilon (n)}b^{\varepsilon
(n)}q_{n+1}q_{n-1}-a^{1-\varepsilon (n)}b^{\varepsilon (n)}q_{n}^{2}=a\left(
-1\right) ^{n},$

\item[$ii)$] $\left( \frac{b}{a}\right) ^{\varepsilon
(n+1)}l_{n-1}l_{n+1}-\left( \frac{b}{a}\right) ^{\varepsilon
(n)}l_{n}^{2}=\left( ab+4\right) \left( -1\right) ^{n+1}.$
\end{itemize}

\begin{proof}
By using Theorem 2 and Corollary 3, we obtain for even and odd $n$%
\begin{equation*}
q_{n+1}q_{n-1}-\frac{b}{a}q_{n}^{2}=1,
\end{equation*}%
\begin{equation*}
l_{n-1}l_{n+1}-\frac{b}{a}l_{n}^{2}=\left( ab+4\right) ,
\end{equation*}%
respectively. That is, for $n\in 
\mathbb{Z}
^{+},$%
\begin{equation*}
aq_{2n+1}q_{2n-1}-bq_{2n}^{2}=a,
\end{equation*}%
\begin{equation*}
al_{2n}l_{2n+2}-bl_{2n+1}^{2}=a\left( ab+4\right) .
\end{equation*}%
Then, by using $q_{2n-1}=bq_{2n}-q_{2n+1}$ and $l_{2n+2}=bl_{2n+1}+l_{2n}$,\
we obtain%
\begin{equation*}
bq_{2n+2}q_{2n}-aq_{2n+1}^{2}=a\left( -1\right) ,
\end{equation*}%
\begin{equation*}
bl_{2n-1}l_{2n+1}-al_{2n}^{2}=a\left( ab+4\right) \left( -1\right) .
\end{equation*}%
If we compose these equations, we conclude%
\begin{equation*}
a^{1-\varepsilon (n)}b^{\varepsilon (n)}q_{n+1}q_{n-1}-a^{1-\varepsilon
(n)}b^{\varepsilon (n)}q_{n}^{2}=a\left( -1\right) ^{n},
\end{equation*}%
\begin{equation*}
\left( \frac{b}{a}\right) ^{\varepsilon (n+1)}l_{n-1}l_{n+1}-\left( \frac{b}{%
a}\right) ^{\varepsilon (n)}l_{n}^{2}=\left( ab+4\right) \left( -1\right)
^{n+1}.
\end{equation*}%
\bigskip 
\end{proof}

\begin{theorem}
Let $n$ be any integer. The Binet formulas of bi-periodic Fibonacci and
Lucas numbers are%
\begin{equation*}
q_{n}=\left( \frac{a^{1-\varepsilon (n)}}{\left( ab\right) ^{\left\lfloor 
\frac{n}{2}\right\rfloor }}\right) \frac{\alpha ^{n}-\beta ^{n}}{\alpha
-\beta },
\end{equation*}%
\begin{equation*}
l_{n}=\frac{1}{a^{\left\lfloor \frac{n}{2}\right\rfloor }b^{\left\lfloor 
\frac{n+1}{2}\right\rfloor }}\left( \alpha ^{n}+\beta ^{n}\right) ,
\end{equation*}%
where $\alpha $ and $\beta $ are roots of $X^{2}-abX-ab=0$ equation.
\end{theorem}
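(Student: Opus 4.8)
The plan is to read both Binet formulas off the diagonalization of $Q_l$ together with the explicit power formula of Theorem 2. First I would compute the characteristic polynomial of $Q_l$: using the trace $a^{2}+\frac{4a}{b}=\frac{a(ab+4)}{b}$ and the determinant $\frac{a^{2}(ab+4)}{b^{2}}$ supplied by Corollary 3, it is $\lambda ^{2}-\frac{a(ab+4)}{b}\lambda +\frac{a^{2}(ab+4)}{b^{2}}=0$, with roots $\lambda _{1,2}=\frac{a}{2b}\left( (ab+4)\pm \sqrt{ab(ab+4)}\right)$. The conceptual heart of the argument is to tie these eigenvalues to $\alpha ,\beta $, the roots of $X^{2}-abX-ab=0$. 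Writing $\rho =\frac{a}{b}\sqrt{\frac{ab+4}{ab}}$, I claim $\lambda _{1}=\rho \alpha $ and $\lambda _{2}=-\rho \beta $. The cleanest way to verify this is by Vieta: using $\alpha +\beta =ab$, $\alpha \beta =-ab$ and $(\alpha -\beta )^{2}=ab(ab+4)$, one checks $\rho \alpha +(-\rho \beta )=\rho (\alpha -\beta )=\frac{a(ab+4)}{b}$ and $\rho \alpha \cdot (-\rho \beta )=-\rho ^{2}\alpha \beta =\frac{a^{2}(ab+4)}{b^{2}}$, which are exactly the coefficients above.

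Next I would diagonalize $Q_{l}=P\,\mathrm{diag}(\lambda _{1},\lambda _{2})\,P^{-1}$, taking $(\lambda _{i}-\frac{2a}{b},\,a)^{\mathsf{T}}$ as the eigenvector for $\lambda _{i}$. A short computation then gives the off-diagonal entry $(Q_{l}^{n})_{12}=\frac{a^{2}}{b}\cdot \frac{\lambda _{1}^{n}-\lambda _{2}^{n}}{\lambda _{1}-\lambda _{2}}$ for every $n$, where the identity $(\lambda _{1}-\frac{2a}{b})(\lambda _{2}-\frac{2a}{b})=-\frac{a^{3}}{b}$ (again a trace/determinant consequence) does the work. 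Substituting $\lambda _{1}=\rho \alpha $, $\lambda _{2}=-\rho \beta $ now collapses the parity automatically: since $\lambda _{1}^{n}-\lambda _{2}^{n}=\rho ^{n}\left( \alpha ^{n}-(-1)^{n}\beta ^{n}\right) $ and $\lambda _{1}-\lambda _{2}=\rho (\alpha +\beta )=\rho \,ab$, the bracket is $\alpha ^{n}+\beta ^{n}$ for odd $n$ and $\alpha ^{n}-\beta ^{n}$ for even $n$, matching precisely the Lucas/Fibonacci split of Theorem 2.

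I would then equate this with the $(1,2)$-entry read off Theorem 2: for odd $n$ it is $\left( \frac{a}{b}\right) ^{n}(ab+4)^{(n-1)/2}l_{n}$, and for even $n$ it is $\left( \frac{a}{b}\right) ^{n}(ab+4)^{n/2}q_{n}$. Solving for $l_{n}$ (odd) and $q_{n}$ (even) and simplifying the powers of $\frac{a}{b}$, $ab$, $ab+4$ via $\rho ^{n-1}=\left( \frac{a}{b}\right) ^{n-1}\left( \frac{ab+4}{ab}\right) ^{(n-1)/2}$ should reduce exactly to $l_{n}=\frac{\alpha ^{n}+\beta ^{n}}{a^{\lfloor n/2\rfloor }b^{\lfloor (n+1)/2\rfloor }}$ and $q_{n}=\frac{a^{1-\varepsilon (n)}}{(ab)^{\lfloor n/2\rfloor }}\cdot \frac{\alpha ^{n}-\beta ^{n}}{\alpha -\beta }$, the two claimed forms.

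The step I expect to be the main obstacle is covering the \emph{other} parities, namely $l_{n}$ at even $n$ and $q_{n}$ at odd $n$, because the $(1,2)$-entry only exhibits $l$ at odd and $q$ at even indices. For this I would repeat the computation on the $(1,1)$-entry, where diagonalization yields the two-term expression $(Q_{l}^{n})_{11}=\frac{\lambda _{1}^{n+1}-\lambda _{2}^{n+1}}{\lambda _{1}-\lambda _{2}}-\frac{2a}{b}\cdot \frac{\lambda _{1}^{n}-\lambda _{2}^{n}}{\lambda _{1}-\lambda _{2}}$; matching it against $q_{n+1}$ (even $n$) and $l_{n+1}$ (odd $n$) closes the gap but requires clearing mixed $\alpha ^{n+1}\pm \beta ^{n+1}$ and $\alpha ^{n}\mp \beta ^{n}$ terms. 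A cheaper alternative is to note that the closed forms satisfy the two-step recurrence encoded by $\alpha ^{2}=ab\,\alpha +ab$, so once they agree with $q_{n},l_{n}$ on two consecutive indices of each parity they agree for all $n$ by induction. Finally, since $Q_{l}$ is invertible (nonzero determinant), the entry formula for $(Q_{l}^{n})_{12}$ holds for all $n\in \mathbb{Z}$, which is exactly what lets the statement stand for every integer $n$ rather than only positive ones.
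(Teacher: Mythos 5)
Your argument is correct and follows essentially the same route as the paper's own proof: both diagonalize $Q_{l}$, identify its eigenvalues as $\rho\alpha$ and $-\rho\beta$ with $\rho=\frac{a^{1/2}\left( ab+4\right) ^{1/2}}{b^{3/2}}$, extract $q_{n}$ for even $n$ and $l_{n}$ for odd $n$ by comparing the entries of $Q_{l}^{n}=UV^{n}U^{-1}$ with the explicit power formula of Theorem 2, and then recover the missing parities from the defining recurrences (the paper uses $q_{2n+2}=aq_{2n+1}+q_{2n}$ and $l_{2n+1}=al_{2n}+l_{2n-1}$, which is precisely the recurrence-based completion you sketch as your cheaper alternative). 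The only cosmetic differences are your choice of eigenvector normalization and the optional $(1,1)$-entry route, neither of which changes the method.
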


\begin{proof}
Let the matrix $Q_{l}$ be as in (\ref{2.1}). Characteristic equation of $%
Q_{l}$-Generating matrix is%
\begin{equation*}
\lambda ^{2}-\left( a^{2}+4\frac{a}{b}\right) \lambda +\frac{a^{3}}{b}+4%
\frac{a^{2}}{b^{2}}=0.
\end{equation*}%
Then, eigenvalues and eigenvectors of the matrix $Q_{l}$ are%
\begin{equation*}
\lambda _{1}=\left( \frac{a^{\frac{1}{2}}}{b^{\frac{3}{2}}}\right) \left(
ab+4\right) ^{\frac{1}{2}}\alpha ,\lambda _{2}=\left( \frac{a^{\frac{1}{2}}}{%
b^{\frac{3}{2}}}\right) \left( ab+4\right) ^{\frac{1}{2}}\left( -\beta
\right) 
\end{equation*}%
and%
\begin{equation*}
u_{1}=\left( \frac{a^{2}}{b},-\frac{a}{b}\beta \right) ,u_{2}=\left( \frac{%
a^{2}}{b},-\frac{a}{b}\alpha \right) 
\end{equation*}%
where $\alpha =\frac{ab+\sqrt{a^{2}b^{2}+4ab}}{2}$ and $\beta =\frac{ab-%
\sqrt{a^{2}b^{2}+4ab}}{2}$. $Q_{l}$-Generating matrix can be diagonalized by
using%
\begin{equation*}
V=U^{-1}Q_{l}U,
\end{equation*}%
which%
\begin{equation*}
U=\left( 
\begin{array}{cc}
u_{1}^{T} & u_{2}^{T}%
\end{array}%
\right) =\left[ 
\begin{array}{cc}
\frac{a^{2}}{b} & \frac{a^{2}}{b} \\ 
-\frac{a}{b}\beta  & -\frac{a}{b}\alpha 
\end{array}%
\right] 
\end{equation*}%
and%
\begin{eqnarray*}
V &=&diag\left( \lambda _{1},\lambda _{2}\right)  \\
&=&\left[ 
\begin{array}{cc}
\left( \frac{a^{\frac{1}{2}}}{b^{\frac{3}{2}}}\right) \left( ab+4\right) ^{%
\frac{1}{2}}\alpha  & 0 \\ 
0 & \left( \frac{a^{\frac{1}{2}}}{b^{\frac{3}{2}}}\right) \left( ab+4\right)
^{\frac{1}{2}}\left( -\beta \right) 
\end{array}%
\right] .
\end{eqnarray*}%
From properties of similar matrices, for $n$ is any integer, we obtain%
\begin{equation*}
Q_{l}^{n}=UV^{n}U^{-1}.
\end{equation*}%
Thus, we get%
\begin{eqnarray*}
Q_{l}^{n} &=&\left( \frac{a^{\frac{n}{2}}}{b^{\frac{3n}{2}}}\right) \frac{%
\left( ab+4\right) ^{\frac{n}{2}}}{\alpha -\beta }\left[ 
\begin{array}{cc}
\alpha ^{n+1}-\beta \left( -\beta \right) ^{n} & a\alpha ^{n}-a\left( -\beta
\right) ^{n} \\ 
b\alpha ^{n}-b\left( -\beta \right) ^{n} & -\beta \alpha ^{n}+\alpha \left(
-\beta \right) ^{n}%
\end{array}%
\right]  \\
&=&\left( \frac{a^{\frac{n-1}{2}}}{b^{\frac{3n+1}{2}}}\right) \left(
ab+4\right) ^{\frac{n-1}{2}}\left[ 
\begin{array}{cc}
\alpha ^{n+1}-\beta \left( -\beta \right) ^{n} & a\alpha ^{n}-a\left( -\beta
\right) ^{n} \\ 
b\alpha ^{n}-b\left( -\beta \right) ^{n} & -\beta \alpha ^{n}+\alpha \left(
-\beta \right) ^{n}%
\end{array}%
\right] .
\end{eqnarray*}%
Taking into account the Theorem 2, for the case $n$ is even and odd, we can
write%
\begin{equation*}
q_{n}=\left( \frac{a}{\left( ab\right) ^{\frac{n}{2}}}\right) \frac{\alpha
^{n}-\beta ^{n}}{\alpha -\beta },
\end{equation*}%
\begin{equation*}
l_{n}=\frac{1}{a^{\frac{n-1}{2}}b^{\frac{n}{2}}}\left( \alpha ^{n}+\beta
^{n}\right) .
\end{equation*}%
respectively. That is,%
\begin{equation*}
q_{2n}=\left( \frac{a}{\left( ab\right) ^{n}}\right) \frac{\alpha
^{2n}-\beta ^{2n}}{\alpha -\beta },
\end{equation*}%
\begin{equation*}
l_{2n+1}=\frac{1}{a^{n}b^{n+1}}\left( \alpha ^{2n+1}+\beta ^{2n+1}\right) .
\end{equation*}%
And also, since $q_{2n+2}=aq_{2n+1}+q_{2n}$ and $l_{2n+1}=al_{2n}+l_{2n-1},$
we have%
\begin{equation*}
q_{2n+1}=\left( \frac{1}{\left( ab\right) ^{n}}\right) \frac{\alpha
^{2n+1}-\beta ^{2n+1}}{\alpha -\beta },
\end{equation*}%
\begin{equation*}
l_{2n}=\frac{1}{\left( ab\right) ^{n}}\left( \alpha ^{2n}+\beta ^{2n}\right)
.
\end{equation*}%
If we compose the obtained results, the desired result is obtained.
\end{proof}

In following theorem, we offer relationships between bi-periodic Fibonacci
sequence and bi-periodic Lucas sequence.

\begin{theorem}
For $m,n\in 
\mathbb{Z}
$, the following statements are true:

\begin{itemize}
\item[$i)$] $\left( ab+4\right) q_{2\left( m+n+1\right) }=l_{2m+1}l_{2\left(
n+1\right) }+l_{2m}l_{2n+1}$,

\item[$ii)$] $q_{2\left( m+n\right) }=q_{2m}q_{2n+1}+q_{2m-1}q_{2n}$,

\item[$iii)$] $l_{2\left( m+n\right) +1}=l_{2m+1}q_{2n+1}+l_{2m}q_{2n}$,

\item[$iv)$] $\left( ab+4\right) q_{2\left( m-n\right) }=l_{2m+1}l_{2\left(
n+1\right) }-l_{2\left( m+1\right) }l_{2n+1}$,

\item[$v)$] $q_{2\left( m-n\right) }=q_{2m}q_{2n+1}-q_{2m+1}q_{2n}$,

\item[$vi)$] $l_{2\left( m-n\right) +1}=q_{2m+1}l_{2n+1}-q_{2\left(
m+1\right) }l_{2n}$.
\end{itemize}
\end{theorem}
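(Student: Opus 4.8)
The plan is to read every identity off the single multiplicative relation $Q_{l}^{p+q}=Q_{l}^{p}Q_{l}^{q}$ (together with its inverse form), using Theorem 2 to translate each power of $Q_{l}$ into a matrix whose entries are bi-periodic Fibonacci or Lucas numbers. The crucial observation is that the parity of the exponent decides whether the entries are $q$'s or $l$'s: an even power produces the Fibonacci block, an odd power the Lucas block. Hence the six identities are obtained by choosing the parities of $p$ and $q$ so that $p+q$ (or $p-q$) has the prescribed parity, forming the matrix product, and comparing a single matched entry on both sides.

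For the three addition identities I would proceed as follows. Identity $ii)$ comes from $Q_{l}^{2m}Q_{l}^{2n}=Q_{l}^{2(m+n)}$: all three powers are even, the scalar prefactors $\left(a/b\right)^{2(m+n)}\left(ab+4\right)^{m+n}$ cancel exactly, and equating the $(2,1)$ entries gives $q_{2(m+n)}=q_{2m}q_{2n+1}+q_{2m-1}q_{2n}$. Identity $iii)$ comes from the mixed product $Q_{l}^{2m+1}Q_{l}^{2n}=Q_{l}^{2(m+n)+1}$, again with matching prefactors, whose $(2,1)$ entry yields $l_{2(m+n)+1}=l_{2m+1}q_{2n+1}+l_{2m}q_{2n}$. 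Identity $i)$ comes from $Q_{l}^{2m+1}Q_{l}^{2n+1}=Q_{l}^{2(m+n+1)}$: here two odd powers carry $\left(ab+4\right)^{m+n}$ while the even result carries $\left(ab+4\right)^{m+n+1}$, so a single surplus factor $ab+4$ survives, and the $(2,1)$ entry delivers $\left(ab+4\right)q_{2(m+n+1)}=l_{2m+1}l_{2n+2}+l_{2m}l_{2n+1}$, which is exactly $i)$.

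The three subtraction identities use the same idea with $Q_{l}^{p-q}=Q_{l}^{p}\left(Q_{l}^{q}\right)^{-1}$, so the extra ingredient is the inverse of an even or odd power. By Corollary 3 and the Cassini relations established in Theorem 4, the Fibonacci block of $Q_{l}^{2n}$ has determinant $q_{2n+1}q_{2n-1}-\tfrac{b}{a}q_{2n}^{2}=1$ and the Lucas block of $Q_{l}^{2n+1}$ has determinant $l_{2n}l_{2n+2}-\tfrac{b}{a}l_{2n+1}^{2}=ab+4$. This makes the adjugate inverses explicit, and then $v)$ follows from the $(1,2)$ entry of $Q_{l}^{2m}\left(Q_{l}^{2n}\right)^{-1}=Q_{l}^{2(m-n)}$, $iv)$ from the $(1,2)$ entry of $Q_{l}^{2m+1}\left(Q_{l}^{2n+1}\right)^{-1}=Q_{l}^{2(m-n)}$ (where again the clean determinant $ab+4$ reappears as the surplus scalar factor), and $vi)$ from a suitable entry of the mixed quotient $Q_{l}^{2m+2}\left(Q_{l}^{2n+1}\right)^{-1}=Q_{l}^{2(m-n)+1}$.

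I expect the main obstacle to be purely the bookkeeping rather than any conceptual difficulty: one must keep the parity-dependent scalar prefactors $\left(a/b\right)^{\bullet}\left(ab+4\right)^{\bullet}$ aligned on the two sides so that exactly the factor $ab+4$ (and nothing else) remains in $i)$ and $iv)$, and one must match the index pattern and sign of the chosen entry to the stated right-hand side. As in the earlier proofs, the final cosmetic step is to reconcile these entries with the printed forms using the basic recurrences such as $q_{2n-1}=bq_{2n}-q_{2n+1}$ and $l_{2n+2}=bl_{2n+1}+l_{2n}$; this is where small sign and index-shift adjustments (for instance turning the computed entry of $vi)$ into the displayed combination) are absorbed.
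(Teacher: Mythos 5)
Your proposal takes essentially the same route as the paper's own proof: the addition identities by comparing the $(1,2)$ and $(2,1)$ entries of $Q_{l}^{m}Q_{l}^{n}=Q_{l}^{m+n}$ in the three parity cases of Theorem 2, and the subtraction identities from $Q_{l}^{m}Q_{l}^{-n}$ with the adjugate inverse whose determinant is supplied by Corollary 3 and the Cassini relations, including the matching of the scalar prefactors so that exactly one factor $ab+4$ survives in $i)$ and $iv)$. One caveat (inherited from the paper itself, not a flaw in your method): for $vi)$ the computed entry yields $l_{2(m-n)+1}=q_{2(m+1)}l_{2n}-q_{2m+1}l_{2n+1}$, which is the negative of the printed identity --- the stated $vi)$ fails already for $a=b=1$, $m=1$, $n=0$, where $L_{3}=4$ but $F_{3}L_{1}-F_{4}L_{0}=-4$ --- so no recurrence manipulation can ``absorb'' that sign, and the theorem's $vi)$ should be read with the two terms interchanged.
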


\begin{proof}
Using (\ref{2.2}), $Q_{l}^{m+n}$ can be written as%
\begin{equation}
Q_{l}^{m+n}=\left\{ 
\begin{array}{c}
\left( \frac{a}{b}\right) ^{m+n}\left( ab+4\right) ^{\frac{m+n}{2}}\left[ 
\begin{array}{cc}
q_{m+n+1} & q_{m+n} \\ 
\frac{b}{a}q_{m+n} & q_{m+n-1}%
\end{array}%
\right] ,\text{ }m+n\text{ }even \\ 
\left( \frac{a}{b}\right) ^{m+n}\left( ab+4\right) ^{\frac{m+n-1}{2}}\left[ 
\begin{array}{cc}
l_{m+n+1} & l_{m+n} \\ 
\frac{b}{a}l_{m+n} & l_{m+n-1}%
\end{array}%
\right] ,\text{ }m+n\text{ }odd%
\end{array}%
\right. .  \label{2.3}
\end{equation}%
For the case of odd $m$ and $n$, we can write%
\begin{equation}
Q_{l}^{m}Q_{l}^{n}=\left( \frac{a}{b}\right) ^{m+n}\left( ab+4\right) ^{%
\frac{m+n}{2}-1}\left[ 
\begin{array}{cc}
l_{m+1}l_{n+1}+\frac{b}{a}l_{m}l_{n} & l_{m+1}l_{n}+l_{m}l_{n-1} \\ 
\frac{b}{a}\left\{ l_{m}l_{n+1}+l_{m-1}l_{n}\right\} & \frac{b}{a}%
l_{m}l_{n}+l_{m-1}l_{n-1}%
\end{array}%
\right] .  \label{2.4}
\end{equation}%
If we compare the $1$\textit{st} row and $2$\textit{nd} column entries of
the matrices (\ref{2.3}) and (\ref{2.4}), we get%
\begin{equation*}
\left( ab+4\right) q_{m+n}=l_{m+1}l_{n}+l_{m}l_{n-1},
\end{equation*}%
On the other hand, comparing the entries $2$\textit{nd} row and $1$\textit{st%
} column, we obtain%
\begin{equation*}
\left( ab+4\right) q_{m+n}=l_{m}l_{n+1}+l_{m-1}l_{n}.
\end{equation*}%
For the case of even $m$ and $n$,%
\begin{equation}
Q_{l}^{m}Q_{l}^{n}=\left( \frac{a}{b}\right) ^{m+n}\left( ab+4\right) ^{%
\frac{m+n}{2}}\left[ 
\begin{array}{cc}
q_{m+1}q_{n+1}+\frac{b}{a}q_{m}q_{n} & q_{m+1}q_{n}+q_{m}q_{n-1} \\ 
\frac{b}{a}\left\{ q_{m}q_{n+1}+q_{m-1}q_{n}\right\} & \frac{b}{a}%
q_{m}q_{n}+q_{m-1}q_{n-1}%
\end{array}%
\right] .  \label{2.5}
\end{equation}%
Comparing the entries (1,2) and (2,1) for the matrices (\ref{2.3}) and (\ref%
{2.5}), we find%
\begin{equation*}
q_{m+n}=q_{m+1}q_{n}+q_{m}q_{n-1},
\end{equation*}%
\begin{equation*}
q_{m+n}=q_{m}q_{n+1}+q_{m-1}q_{n}.
\end{equation*}%
For the case of odd $m$ and even $n$ (or case of even $m$ and odd $n)$,%
\begin{equation}
Q_{l}^{m}Q_{l}^{n}=\left( \frac{a}{b}\right) ^{m+n}\left( ab+4\right) ^{%
\frac{m+n-1}{2}}\left[ 
\begin{array}{cc}
l_{m+1}q_{n+1}+\frac{b}{a}l_{m}q_{n} & l_{m+1}q_{n}+l_{m}q_{n-1} \\ 
\frac{b}{a}\left\{ l_{m}q_{n+1}+l_{m-1}q_{n}\right\} & \frac{b}{a}%
l_{m}q_{n}+l_{m-1}q_{n-1}%
\end{array}%
\right] .  \label{2.6}
\end{equation}%
And if we compare the entries (1,2) and (2,1) for the matrices (\ref{2.3})
and (\ref{2.6}), we acquire%
\begin{equation*}
l_{m+n}=l_{m+1}q_{n}+l_{m}q_{n-1},
\end{equation*}%
\begin{equation*}
l_{m+n}=l_{m}q_{n+1}+l_{m-1}q_{n}.
\end{equation*}%
Similarly, by calculating inverse of the matrix $Q_{l}^{n}$ in (\ref{2.2}),
we conclude%
\begin{equation*}
Q_{l}^{-n}=\left\{ 
\begin{array}{c}
\left( \frac{a}{b}\right) ^{-n}\left( ab+4\right) ^{-\frac{n}{2}}\left[ 
\begin{array}{cc}
q_{n-1} & -q_{n} \\ 
-\frac{b}{a}q_{n} & q_{n+1}%
\end{array}%
\right] ,\text{ }n\text{ }even \\ 
\left( \frac{a}{b}\right) ^{-n}\left( ab+4\right) ^{-\frac{n+1}{2}}\left[ 
\begin{array}{cc}
l_{n-1} & -l_{n} \\ 
-\frac{b}{a}l_{n} & l_{n11}%
\end{array}%
\right] ,\text{ }n\text{ }odd%
\end{array}%
\right. .
\end{equation*}%
Benefiting from the equality $Q_{l}^{m-n}=Q_{l}^{m}Q_{l}^{-n}$ and by
comparing the entries (1,2) and (2,1) of these matrices, the desired result
can be obtained. That is, for the case of odd $m$\ and $n$, we get%
\begin{equation*}
\left( ab+4\right) q_{m-n}=l_{m}l_{n+1}-l_{m+1}l_{n}.
\end{equation*}%
For the case of even $m$ and $n$, we obtain%
\begin{equation*}
q_{m-n}=q_{m}q_{n+1}-q_{m+1}q_{n}.
\end{equation*}%
Finally, for the case of even $m$ and odd $n$ (or case of odd $m$ and even $%
n $), we acquire%
\begin{equation*}
l_{m-n}=q_{m}l_{n+1}-q_{m+1}l_{n},
\end{equation*}%
Thus, we have the desired expressions.
\end{proof}

\section{Conclusion}

This paper presents the some properties of bi-periodic Fibonacci and Lucas
numbers and relationships between these sequences by using the $Q_{l}$%
-Generating matrix. Also, some well-known matrices are special cases of this
generating matrix. For example, if we choose $a=b=1\ $and $a=b=k$, we get
the Lucas $Q_{L}$-matrix and $k$-Lucas Companion matrix, respectively.

\section{Acknowledgement}

This study is a part of Arzu Co\c{s}kun's Ph.D. Thesis. Thank to the editor
and reviewers for their interests and valuable comments.


\begin{thebibliography}{99}
\bibitem{1} Bilgici G., Generalized Order--$k$ Pell--Padovan--Like Numbers
by Matrix Methods, Pure and Applied Mathematics Journal, \textbf{2}(6),
(2013), 174-178.

\bibitem{2} Bilgici G., Two generalization of Lucas sequence, \textit{%
Applied Mathematics and Computation,} \textbf{245,} (2014), 526-538.

\bibitem{3} Cerda-Morales, G., Matrix representation of the $q$-Jacobsthal
numbers, \textit{Proyecciones (Antofagasta),} \textbf{31}(4), (2012),
345-354.

\bibitem{4} Edson M., Yayanie O., A New Generalization of Fibonacci Sequence
and Extended Binet Formula,\textit{\ Integers}, \textbf{9}, (2009), 639-654.

\bibitem{5} Er M. C., Sums of Fibonacci numbers by matrix methods, \textit{%
Fibonacci Quart}, \textbf{22}(3), (1984), 204-207.

\bibitem{6} Falcon S., Plaza A., On the Fibonacci $k$-numbers, \textit{%
Chaos, Solitons \& Fractal}, \textbf{32,} (2007), 1615-1624.

\bibitem{7} Gould H. W., A history of the Fibonacci $Q$-matrix and a higher
dimensional problem, \textit{Fibonacci Quarterly}, \textbf{19}, (1981),
7-250.

\bibitem{8} Gulec H.H., Taskara N., On the $(s,t)$-Pell and $(s,t)$%
-Pell-Lucas sequences and their matrix representations, \textit{Applied
Mathematics Letter,} \textbf{25,} (2012), 1554-1559.

\bibitem{9} Irmak N., Alp M., Tribonacci numbers with indices in arithmetic
progression and their sums, \textit{Miskolc Mathematical Notes}, \textbf{14}%
(1), (2013), 125-133.

\bibitem{10} Koken F., Bozkurt D., On Lucas Numbers by the Matrix Method, 
\textit{Hacettepe Journal of Mathematics and Statistics}, \textbf{39}(4),
(2010), 471-475.

\bibitem{11} Koshy T., \textit{Fibonacci and Lucas Numbers with Applications}%
, John Wiley and Sons Inc., NY, 2001.

\bibitem{12} Sylvester J.R., Fibonacci Properties by Matrix Method, \textit{%
Mathematical Gazette}, \textbf{63}, (1979), 188-191.

\bibitem{13} Sokhuma K., Padovan $Q$-Matrix and the Generalized Relations, 
\textit{Applied Mathematical Sciences}, \textbf{7}(56), (2013), 2777-2780.

\bibitem{14} Uslu K., Uygun S., The $(s,t)$ Jacobsthal and $(s,t)$
Jacobsthal-Lucas Matrix Sequences, \textit{ARS Combinatoria},\textbf{\ 108,}
(2013), 13-22.

\bibitem{15} Vajda S., \textit{Fibonacci \& Lucas numbers and the golden
section. Theory and Applications}, Ellis Horwood Limited, 1989.

\bibitem{16} Yazlik Y., Taskara N., A note on generalized $k$-Horadam
sequence, \textit{Computers and Mathematics with Applications}, \textbf{63,}
(2012), 36-41.

\bibitem{17} Yilmaz F., Bozkurt D., Some properties of Padovan sequence by
matrix method, \textit{Ars Combinatoria},\textbf{\ 104}, (2012), 149-160.

\bibitem{18} Yilmaz N., Yazlik Y., Taskara N., On the $k$-Generalized
Fibonacci Numbers, \textit{Sel\c{c}uk J. Appl. Math.}, 13(1),(2012), 83-88.

\bibitem{19} Yilmaz N., Taskara N., Matrix sequences in terms of Padovan and
Perrin numbers, \textit{Journal of Applied Mathematics,} \textbf{2013,}
(2013).
\end{thebibliography}
\end{document}